\DeclareMathOperator{\conv}{{Conv}}
\begin{document}
\setcounter{page}{1}

\title*{The cone of curves of K3 surfaces revisited}%
\author{S\'andor J Kov\'acs}%
\institute{University of Washington\\ Department of Mathematics\\ Box
  354350\\ Seattle, WA 98195-4350, USA\\
\texttt{skovacs@uw.edu}\\
\texttt{http://www.math.washington.edu/$\sim$kovacs}\\ \ }

\maketitle

\spnewtheorem{proclaim}{Theorem}[section]{\bf}{\it}
\spnewtheorem{thm}[proclaim]{Theorem}{\bf}{\it}
\spnewtheorem{mainthm}[proclaim]{Main Theorem}{\bf}{\it}
\spnewtheorem{cor}[proclaim]{Corollary} {\bf}{\it}
\spnewtheorem{cors}[proclaim]{Corollaries} {\bf}{\it}
\spnewtheorem{lem}[proclaim]{Lemma} {\bf}{\it}
\spnewtheorem{prop}[proclaim]{Proposition} {\bf}{\it}
\spnewtheorem{conj}[proclaim]{Conjecture}{\bf}{\it}
\spnewtheorem{subproclaim}[equation]{Theorem}{\bf}{\it}
\spnewtheorem{subthm}[equation]{Theorem}{\bf}{\it}
\spnewtheorem{subcor}[equation]{Corollary} {\bf}{\it}
\spnewtheorem{sublem}[equation]{Lemma} {\bf}{\it}
\spnewtheorem{subprop}[equation]{Proposition} {\bf}{\it}
\spnewtheorem{subconj}[equation]{Conjecture}{\bf}{\it}
\spnewtheorem{rem}[proclaim]{Remark}{\sc}{\rm}
\spnewtheorem{subrem}[equation]{Remark}{\sc}{\rm}
\spnewtheorem{notation}[proclaim]{Notation} {\sc}{\rm}
\spnewtheorem{assume}[proclaim]{Assumptions} {\sc}{\rm}
\spnewtheorem{obs}[proclaim]{Observation} {\sc}{\rm}
\spnewtheorem{subclaim}[equation]{Claim} {\sc}{\rm}
\spnewtheorem{defini}[proclaim]{Definition}{\sc}{\rm}
\spnewtheorem{exmp}[proclaim]{Example}{\sc}{\rm}
\spnewtheorem{defno}[proclaim]{Definitions and Notation}{\sc}{\rm}
\spnewtheorem{defn}[proclaim]{Definition}{\sc}{\rm}
\spnewtheorem{subdefn}[equation]{Definition}{\sc}{\rm}
\numberwithin{equation}{proclaim}

\newcommand{\rp}{\mathbb R_+}
\newcommand{\NE}{\ensuremath{\overline{N\!E}}}
\newcommand{\sA}{\mathscr{A}}
\newcommand{\sB}{\mathscr{B}}
\newcommand{\sC}{\mathscr{C}}
\newcommand{\sD}{\mathscr{D}}
\newcommand{\sE}{\mathscr{E}}
\newcommand{\sF}{\mathscr{F}}
\newcommand{\sG}{\mathscr{G}}
\newcommand{\sH}{\mathscr{H}}
\newcommand{\sI}{\mathscr{I}}
\newcommand{\sJ}{\mathscr{J}}
\newcommand{\sK}{\mathscr{K}}
\newcommand{\sL}{\mathscr{L}}
\newcommand{\sM}{\mathscr{M}}
\newcommand{\sN}{\mathscr{N}}
\newcommand{\sO}{\mathscr{O}}
\newcommand{\sP}{\mathscr{P}}
\newcommand{\sQ}{\mathscr{Q}}
\newcommand{\sR}{\mathscr{R}}
\newcommand{\sS}{\mathscr{S}}
\newcommand{\sT}{\mathscr{T}}
\newcommand{\sU}{\mathscr{U}}
\newcommand{\sV}{\mathscr{V}}
\newcommand{\sW}{\mathscr{W}}
\newcommand{\sX}{\mathscr{X}}
\newcommand{\sY}{\mathscr{Y}}
\newcommand{\sZ}{\mathscr{Z}}

\newcommand{\sfA}{{\sf A}}
\newcommand{\sfB}{{\sf B}}
\newcommand{\sfC}{{\sf C}}
\newcommand{\sfD}{{\sf D}}
\newcommand{\sfE}{{\sf E}}
\newcommand{\sfF}{{\sf F}}
\newcommand{\sfG}{{\sf G}}
\newcommand{\sfH}{{\sf H}}
\newcommand{\sfI}{{\sf I}}
\newcommand{\sfJ}{{\sf J}}
\newcommand{\sfK}{{\sf K}}
\newcommand{\sfL}{{\sf L}}
\newcommand{\sfM}{{\sf M}}
\newcommand{\sfN}{{\sf N}}
\newcommand{\sfO}{{\sf O}}
\newcommand{\sfP}{{\sf P}}
\newcommand{\sfQ}{{\sf Q}}
\newcommand{\sfR}{{\sf R}}
\newcommand{\sfS}{{\sf S}}
\newcommand{\sfT}{{\sf T}}
\newcommand{\sfU}{{\sf U}}
\newcommand{\sfV}{{\sf V}}
\newcommand{\sfW}{{\sf W}}
\newcommand{\sfX}{{\sf X}}
\newcommand{\sfY}{{\sf Y}}
\newcommand{\sfZ}{{\sf Z}}

\newcommand{\bA}{\mathbb{A}}
\newcommand{\bB}{\mathbb{B}}
\newcommand{\bC}{\mathbb{C}}
\newcommand{\bD}{\mathbb{D}}
\newcommand{\bE}{\mathbb{E}}
\newcommand{\bF}{\mathbb{F}}
\newcommand{\bG}{\mathbb{G}}
\newcommand{\bH}{\mathbb{H}}
\newcommand{\bI}{\mathbb{I}}
\newcommand{\bJ}{\mathbb{J}}
\newcommand{\bK}{\mathbb{K}}
\newcommand{\bL}{\mathbb{L}}
\newcommand{\bM}{\mathbb{M}}
\newcommand{\bN}{\mathbb{N}}
\newcommand{\bO}{\mathbb{O}}
\newcommand{\bP}{\mathbb{P}}
\newcommand{\bQ}{\mathbb{Q}}
\newcommand{\bR}{\mathbb{R}}
\newcommand{\bS}{\mathbb{S}}
\newcommand{\bT}{\mathbb{T}}
\newcommand{\bU}{\mathbb{U}}
\newcommand{\bV}{\mathbb{V}}
\newcommand{\bW}{\mathbb{W}}
\newcommand{\bX}{\mathbb{X}}
\newcommand{\bY}{\mathbb{Y}}
\newcommand{\bZ}{\mathbb{Z}}

\newcommand{\cA}{\mathcal{A}}
\newcommand{\cB}{\mathcal{B}}
\newcommand{\cC}{\mathcal{C}}
\newcommand{\cD}{\mathcal{D}}
\newcommand{\cE}{\mathcal{E}}
\newcommand{\cF}{\mathcal{F}}
\newcommand{\cG}{\mathcal{G}}
\newcommand{\cH}{\mathcal{H}}
\newcommand{\cI}{\mathcal{I}}
\newcommand{\cJ}{\mathcal{J}}
\newcommand{\cK}{\mathcal{K}}
\newcommand{\cL}{\mathcal{L}}
\newcommand{\cM}{\mathcal{M}}
\newcommand{\cN}{\mathcal{N}}
\newcommand{\cO}{\mathcal{O}}
\newcommand{\cP}{\mathcal{P}}
\newcommand{\cQ}{\mathcal{Q}}
\newcommand{\cR}{\mathcal{R}}
\newcommand{\cS}{\mathcal{S}}
\newcommand{\cT}{\mathcal{T}}
\newcommand{\cU}{\mathcal{U}}
\newcommand{\cV}{\mathcal{V}}
\newcommand{\cW}{\mathcal{W}}
\newcommand{\cX}{\mathcal{X}}
\newcommand{\cY}{\mathcal{Y}}
\newcommand{\cZ}{\mathcal{Z}}

\newcommand{\frA}{\mathfrak{A}}
\newcommand{\frB}{\mathfrak{B}}
\newcommand{\frC}{\mathfrak{C}}
\newcommand{\frD}{\mathfrak{D}}
\newcommand{\frE}{\mathfrak{E}}
\newcommand{\frF}{\mathfrak{F}}
\newcommand{\frG}{\mathfrak{G}}
\newcommand{\frH}{\mathfrak{H}}
\newcommand{\frI}{\mathfrak{I}}
\newcommand{\frJ}{\mathfrak{J}}
\newcommand{\frK}{\mathfrak{K}}
\newcommand{\frL}{\mathfrak{L}}
\newcommand{\frM}{\mathfrak{M}}
\newcommand{\frN}{\mathfrak{N}}
\newcommand{\frO}{\mathfrak{O}}
\newcommand{\frP}{\mathfrak{P}}
\newcommand{\frQ}{\mathfrak{Q}}
\newcommand{\frR}{\mathfrak{R}}
\newcommand{\frS}{\mathfrak{S}}
\newcommand{\frT}{\mathfrak{T}}
\newcommand{\frU}{\mathfrak{U}}
\newcommand{\frV}{\mathfrak{V}}
\newcommand{\frW}{\mathfrak{W}}
\newcommand{\frX}{\mathfrak{X}}
\newcommand{\frY}{\mathfrak{Y}}
\newcommand{\frZ}{\mathfrak{Z}}

\newcommand{\fra}{\mathfrak{a}}
\newcommand{\frb}{\mathfrak{b}}
\newcommand{\frc}{\mathfrak{c}}
\newcommand{\frd}{\mathfrak{d}}
\newcommand{\fre}{\mathfrak{e}}
\newcommand{\frf}{\mathfrak{f}}
\newcommand{\frg}{\mathfrak{g}}
\newcommand{\frh}{\mathfrak{h}}
\newcommand{\fri}{\mathfrak{i}}
\newcommand{\frj}{\mathfrak{j}}
\newcommand{\frk}{\mathfrak{k}}
\newcommand{\frl}{\mathfrak{l}}
\def\frm{\mathfrak{m}}
\newcommand{\frn}{\mathfrak{n}}
\newcommand{\fro}{\mathfrak{o}}
\newcommand{\frp}{\mathfrak{p}}
\def\frq{\mathfrak{q}}
\newcommand{\frr}{\mathfrak{r}}
\newcommand{\frs}{\mathfrak{s}}
\newcommand{\frt}{\mathfrak{t}}
\newcommand{\fru}{\mathfrak{u}}
\newcommand{\frv}{\mathfrak{v}}
\newcommand{\frw}{\mathfrak{w}}
\newcommand{\frx}{\mathfrak{x}}
\newcommand{\fry}{\mathfrak{y}}

\newcommand{\fA}{\mathfrak{A}}
\newcommand{\fB}{\mathfrak{B}}
\newcommand{\fC}{\mathfrak{C}}
\newcommand{\fD}{\mathfrak{D}}
\newcommand{\fE}{\mathfrak{E}}
\newcommand{\fF}{\mathfrak{F}}
\newcommand{\fG}{\mathfrak{G}}
\newcommand{\fH}{\mathfrak{H}}
\newcommand{\fI}{\mathfrak{I}}
\newcommand{\fJ}{\mathfrak{J}}
\newcommand{\fK}{\mathfrak{K}}
\newcommand{\fL}{\mathfrak{L}}
\newcommand{\fM}{\mathfrak{M}}
\newcommand{\fN}{\mathfrak{N}}
\newcommand{\fO}{\mathfrak{O}}
\newcommand{\fP}{\mathfrak{P}}
\newcommand{\fQ}{\mathfrak{Q}}
\newcommand{\fR}{\mathfrak{R}}
\newcommand{\fS}{\mathfrak{S}}
\newcommand{\fT}{\mathfrak{T}}
\newcommand{\fU}{\mathfrak{U}}
\newcommand{\fV}{\mathfrak{V}}
\newcommand{\fW}{\mathfrak{W}}
\newcommand{\fX}{\mathfrak{X}}
\newcommand{\fY}{\mathfrak{Y}}

\newcommand{\ff}{\mathfrak{f}}
\newcommand{\fg}{\mathfrak{g}}
\newcommand{\fm}{\mathfrak{m}}
\newcommand{\fu}{\mathfrak{u}}

\newcommand{\al}{\alpha}
\newcommand{\be}{\beta}
\newcommand{\ga}{\gamma}
\newcommand{\de}{\delta}
\newcommand{\pa}{\partial}   
\newcommand{\delbar}{\overline{\partial}}
\newcommand{\epz}{\varepsilon}
\newcommand{\ph}{\phi}
\newcommand{\phz}{\varphi}
\newcommand{\et}{\eta}
\newcommand{\io}{\iota}
\newcommand{\ka}{\kappa}
\newcommand{\la}{\lambda}
\newcommand{\tha}{\theta}
\newcommand{\thz}{\vartheta}
\newcommand{\rh}{\rho}
\newcommand{\si}{\sigma}
\newcommand{\ta}{\tau}
\newcommand{\ch}{\chi}
\newcommand{\ps}{\psi}
\newcommand{\ze}{\zeta}
\newcommand{\om}{\omega}
\newcommand{\GA}{\Gamma}
\newcommand{\LA}{\Lambda}
\newcommand{\DE}{\Delta}
\newcommand{\SI}{\Sigma}
\newcommand{\THA}{\Theta}
\newcommand{\OM}{\Omega}
\newcommand{\XI}{\Xi}
\newcommand{\UP}{\Upsilon}
\newcommand{\PI}{\Pi}
\newcommand{\PS}{\Psi}
\newcommand{\PH}{\Phi}

\newcommand{\com}{\circ}     
\newcommand{\iso}{\simeq}    
\newcommand{\ten}{\varotimes}   
\newcommand{\add}{\oplus}    
\newcommand{\lbar}{\overline}
\newcommand{\ol}{\overline}
\newcommand{\vvert}{\,\vert\,}

\newcommand{\ul}{\underline}
\newcommand{\lineq}{\sim}
\newcommand{\simq}[0]{\sim_{\bQ}}
\newcommand{\tcdot}{\cdot}

\newcommand{\rtarr}{\longrightarrow}
\newcommand{\ltarr}{\longleftarrow}
\newcommand{\from}{\longleftarrow}
\newcommand{\monoto}{\lhook\joinrel\relbar\joinrel\rightarrow}
\newcommand{\epito}{\relbar\joinrel\twoheadrightarrow}
\newcommand{\into}{\hookrightarrow}
\newcommand{\longinto}{\lhook\joinrel\relbar\joinrel\longrightarrow}
\newcommand{\onto}{\twoheadrightarrow}
\newcommand{\isom}{\overset{\simeq\ }\longrightarrow}
\newcommand{\ideal}{\vartriangleleft}
\newcommand{\wigglyarrow}{\rightsquigarrow}
\newcommand{\wtilde}{\widetilde}
\newcommand{\wt}{\widetilde}
\newcommand{\what}{\widehat}
\newcommand{\od}[1]{\ensuremath{\mcd(\!{#1}\!)}}
\newcommand{\rup}[1]{\lceil{#1}\rceil}
\newcommand{\rdown}[1]{\lfloor{#1}\rfloor}
\newcommand{\rupplus}[1]{\left\lceil{#1}\right\rceil}
\newcommand{\rdownplus}[1]{\left\lfloor{#1}\right\rfloor}

\newcommand{\Cal}{\mathscr}
\newcommand{\leteq}{\colon\!\!\!=}
\newcommand{\col}{\colon}
\newcommand{\Mg}{\mathfrak M_g}
\newcommand{\Mgbar}{\overline{\mathfrak M}_g}
\newcommand{\Mq}{\mathfrak M_q}
\newcommand{\Mqbar}{\overline{\mathfrak M}_q}
\newcommand{\Mh}{\mathfrak M_h}
\newcommand{\Mhbar}{\overline{\mathfrak M}_h}
\newcommand{\maxid}{\mathfrak m}
\newcommand{\MP}{\mcM\!\mcP}
\newcommand{\Schs}{\ensuremath{{\sf{Sch}_{S}}}\xspace}
\newcommand{\LocFrees}{\ensuremath{{\sf{LocFree}_{S}}}\xspace}
\newcommand{\Ab}{\ensuremath{{\sf{Ab}}}\xspace}
\newcommand{\Mod}{\ensuremath{{\sf{Mod}}}\xspace}
\newcommand{\sets}{\ensuremath{{\sf{Sets}}}\xspace}
\newcommand{\sch}{\ensuremath{{\sf{Sch}}}\xspace}
\newcommand{\hilb}{\ensuremath{{\mcH\mci\mcl\mcb}}\xspace}
\newcommand{\grass}{\ensuremath{{\mcG\mcr\mca\mcs\mcs}}\xspace}
\newcommand{\kSch}{\ensuremath{\sf{Sch}}\xspace}
\newcommand{\kCat}{\ensuremath{\sf{Cat}}\xspace}
\newcommand{\kHrc}{\ensuremath{\sf{Hrc}}\xspace}
\newcommand{\uSch}{\underline{\kSch}\xspace}
\newcommand{\uCat}{\underline{\kCat}\xspace}
\newcommand{\uTwo}{\underline{2}}
\newcommand{\uOne}{\underline{1}}
\newcommand{\uZero}{\underline{0}}
\newcommand{\uFaisc}{{\sf Sheaves}}
\newcommand{\infinity}{\infty}
\newcommand{\sub}{\subseteq}
\newcommand{\tensor}{\varotimes}
\newcommand{\lpback}[1]{\mcL{#1}^*}
\newcommand{\hypush}{\myR\phi_{\cdot *}}
\newcommand{\rpforward}[1]{\myR{#1}_*}
\newcommand{\ratmap}{\dasharrow} 
\newcommand{\eq}{\equiv}
\newcommand{\myquad}{\ }
\newcommand{\normalsgp}{\,\triangleleft\,}
\newcommand{\longto}{\longrightarrow}
\newcommand{\factor}[2]{\left. \raise 2pt\hbox{\ensuremath{#1}} \right/
        \hskip -2pt\raise -2pt\hbox{\ensuremath{#2}}}
\newcommand{\karldot}{{{\,\begin{picture}(1,1)(-1,-2)\circle*{2}\end{picture}\ }}}
\newcommand{\nsubset}{\not\subset}
\newcommand{\disjoint}{\overset{_\kdot}\cup}
\newcommand{\myR}{{\mcR\!}}
\newcommand{\myRi}[1]{{\mcR^{\!#1}\!}}
\newcommand{\myH}{{\bf \mathcal H}}
\newcommand{\myL}{{\mcL}}
\newcommand{\tld}{\widetilde }
\newcommand{\blank}{\underline{\hskip 10pt}}
\newcommand{\hypertensor}{{\uline \tensor}}
\newcommand{\uline}{\underline}
\newcommand{\bproj}{\mathbf{Proj}}
\newcommand{\bolddot}{\kdot} 
\newcommand{\bdot}{\kdot}
\newcommand{\boldldot}{{{\ensuremath\centerdot}}}
\newcommand{\kdot}{{{\,\begin{picture}(1,1)(-1,-2)\circle*{2}\end{picture}\ }}}
\newcommand{\mydot}{\kdot}
\newcommand{\complex}[1]{{#1}^{\raisebox{.15em}{\ensuremath\kdot}}}
\newcommand{\cx}{\sf}
\newcommand{\DuBois}[1]{{\underline \Omega {}^0_{#1}}}
\newcommand{\FullDuBois}[1]{{\underline \Omega {}^{\mydot}_{#1}}}
\newcommand{\Om}{\underline{\Omega}}
\newcommand{\Ox}[2]{\underline{\Omega}_X^{\,#1}(\log{#2})}
\newcommand{\Oxc}[2]{\underline{\Omega}_{X/B}^{\,#1}(\log{#2})}
\newcommand{\Oy}[2]{{\Omega}_X^{\,#1}(\log{#2})}
\newcommand{\Oyc}[2]{{\Omega}_{X/B}^{\,#1}(\log{#2})}
\newcommand{\resto}{\big\vert_}
\newcommand{\rresto}{\right\vert_}
\newcommand\toappear{\rm (to appear)}
\newcommand\cf{cf.\ \cite}
\newcommand\mycf{cf.\ \mycite}
\newcommand\te{there exist}
\newcommand\st{such that}

\section{Introduction}

The following theorem was proved in \cite{Kovacs94} over the complex numbers. It
turns out that the proof given there works with very small adjustments in arbitrary
characteristic.  The main difference is that while in the original article we worked
in a real homology group of the surface in question, here everything takes place in
the group of $\bR$-cycles modulo numerical equivalence. The arguments are essentially
the same.

As already noted the purpose of this note is to verify the above statement, that is,
to prove the following.

\begin{thm}\label{thm:intro}
  Let $X$ be a K3 surface of Picard number at least three over an algebraically
  closed field of arbitrary characteristic. Then one of the following mutually
  exclusive conditions are satisfied:
  \begin{enumerate}
  \item $X$ does not contain any curve of negative self-intersection
  \item $\displaystyle \NE(X)=\overline{\sum \rp \ell}$ where the sum runs over the
    classes of all smooth rational curves on $X$.
  \end{enumerate}
\end{thm}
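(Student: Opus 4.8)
The plan is to translate the statement into the geometry of the intersection form on $N^1(X)_{\bR}$ and to isolate a single lattice-theoretic fact as the crux.

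\emph{Reductions.} Since $K_X\cong\cO_X$, adjunction gives $C^2=2p_a(C)-2$ for every integral curve $C\subseteq X$; hence the curves of negative self-intersection are exactly the smooth rational curves, each with $C^2=-2$. Riemann--Roch on $X$ reads $\chi(\cO_X(D))=2+\tfrac12 D^2$, so a class $D$ with $D^2=-2$ has $D$ or $-D$ effective; therefore ``$X$ carries no curve of negative self-intersection'' is equivalent to ``$\Pic(X)$ contains no class of square $-2$'', which is the first alternative. It remains to prove that if $X$ carries a $(-2)$-curve then $\NE(X)=\mathcal R$, where $\mathcal R:=\overline{\sum_\ell\rp[\ell]}$ is the closed cone generated by the classes of the smooth rational curves. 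I work in $V:=N^1(X)_{\bR}$, which for a K3 surface is canonically $\Pic(X)_{\bR}$, equipped with the intersection pairing $q$; by the Hodge index theorem $q$ has signature $(1,\rho-1)$ with $\rho=\rho(X)\ge 3$. Fix an ample class $h$ and let $\mathcal C^+$ be the ``round'' component of $\{x:q(x)>0\}$ containing $h$, so that $\overline{\mathcal C^+}$ is self-dual with respect to $q$ and is the closed convex cone spanned by its isotropic (null) boundary rays.

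\emph{Two standard inclusions.} First, $\overline{\mathcal C^+}\subseteq\NE(X)$: a $\bQ$-point of $\mathcal C^+$ has an integral multiple $D$ with $D^2>0$ and $D\cdot h>0$, so Riemann--Roch and Serre duality give $h^0(\cO_X(D))>0$; as such $\bQ$-points are dense in $\mathcal C^+$ and $\NE(X)$ is closed, the inclusion follows. Second, for any effective curve class $\alpha$ the Zariski decomposition $\alpha=P+N$ (available on smooth surfaces in every characteristic) has $P$ nef and $N$ a nonnegative combination of $(-2)$-curves; moreover a nef class $P$ satisfies $P^2\ge 0$ and $P\cdot h\ge 0$, hence $P\in\overline{\mathcal C^+}$, so $\overline{\mathrm{Nef}}(X)\subseteq\overline{\mathcal C^+}$. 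Consequently, once we know $\overline{\mathcal C^+}\subseteq\mathcal R$, every effective class lies in $\mathcal R$ (as $N\in\mathcal R$ trivially and $P\in\overline{\mathcal C^+}\subseteq\mathcal R$), and passing to closures gives $\NE(X)\subseteq\mathcal R$; the reverse inclusion is clear. Thus the theorem reduces to the key claim $\overline{\mathcal C^+}\subseteq\mathcal R$.

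\emph{The key claim and where $\rho\ge 3$ enters.} Since $\overline{\mathcal C^+}$ is the closed convex hull of its null rays, it suffices to show that the rays $\rp[\ell]$ spanned by smooth rational curves are dense, in $\bP(V)$, among the null rays of $\overline{\mathcal C^+}$. On $h^\perp$ the form $-q$ is positive definite, so for each $M$ only finitely many $(-2)$-classes $\ell$ satisfy $\ell\cdot h\le M$; hence once there are infinitely many rational curves one has $\ell\cdot h\to\infty$, and then $q(\ell/(\ell\cdot h))\to 0$, so every accumulation ray of $(-2)$-curves is automatically null. The real content is to produce, near \emph{every} null ray, infinitely many $(-2)$-classes: starting from one $(-2)$-curve and exploiting the reflections $s_\ell\colon v\mapsto v+(v\cdot[\ell])[\ell]$ together with the extra room afforded by $\rho-1\ge 2$ in the hyperbolic lattice $\Pic(X)$, one builds, for a dense set of primitive isotropic $w$, integral vectors $\delta_n$ with $\delta_n^2=-2$ and $[\delta_n]\to[w]$; each $\delta_n$ (or $-\delta_n$) is effective and hence a sum of $(-2)$-curves, and since $[\delta_n]/(\delta_n\cdot h)$ converges to the extreme point $[w]$ of a cross-section of $\overline{\mathcal C^+}$, the mass of this convex combination must concentrate near $[w]$, forcing a genuine $(-2)$-curve ray to lie near $\rp w$. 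Taking closures yields all null rays, hence $\overline{\mathcal C^+}\subseteq\mathcal R$.

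The main obstacle is precisely this density statement. It genuinely fails when $\rho=2$ --- there the ``light cone'' degenerates to two rays and a single $(-2)$-curve need not drag companions along the opposite ray, so $\NE(X)$ can be $\overline{\mathcal C^+}+\rp[\ell]$ rather than $\mathcal R$ --- which is why that case is excluded; and the delicate part for $\rho\ge 3$ is making the accumulation of $(-2)$-classes unconditional at \emph{all} null rays (not merely those orthogonal to some fixed root), which rests on the arithmetic of the hyperbolic lattice $\Pic(X)$, equivalently on the largeness of the group generated by the $(-2)$-reflections acting on $\bH^{\rho-1}$ once $\rho\ge 3$.
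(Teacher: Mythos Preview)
Your overall architecture differs from the paper's. You reduce everything to the inclusion $\overline{\mathcal C^+}\subseteq\mathcal R$ via Zariski decomposition, and then further to a density statement for $(-2)$-classes along the null rays. The paper instead argues by contradiction: it assumes $\NE(X)$ has a \emph{circular part} $U$ (an open piece of the boundary where the cone is nowhere locally polyhedral), shows $U\subseteq\sQ(X)$, produces an effective isotropic $e\in U$ using a rank-two Pell-equation lemma, and then writes down an explicit one-parameter family $d_n$ (built from $e$, the given $(-2)$-curve $\ell$, and a class $d$ pulled back from the contraction of $\ell$) with $d_n^2=-2$ and $\rp d_n\to\rp e$, contradicting the local structure near $U$. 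The absence of a circular part then gives the theorem via a convexity lemma quoted from \cite{Kovacs94}.

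There is a genuine gap in your proposal, and you have in fact located it yourself: the construction of $\delta_n$ with $\delta_n^2=-2$ and $[\delta_n]\to[w]$ is asserted but never carried out. Your appeal to ``the reflections $s_\ell$'' and the ``largeness'' of the reflection group is not sufficient: the hypothesis hands you a \emph{single} $(-2)$-curve $\ell$, hence a single reflection of order two, and nothing more; to get a large reflection group you would already need many roots, which is what you are trying to produce. (Note also that $\Pic(X)$ may contain no isotropic vector at all when $\rho=3$, so ``a dense set of primitive isotropic $w$'' can be empty; your ``taking closures'' step then has nothing to close up.) The paper's explicit $d_n$ is precisely the missing construction, and the verification that its leading coefficient $2B^2-AC^2$ is nonzero is exactly where $\rho\ge 3$ and the choice $d\cdot\ell=0$ are used. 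A second, smaller gap: your ``mass concentration'' step is not justified as written. You argue that since $\delta_n/(\delta_n\cdot h)$ converges to an extreme point $\bar w$ of the ellipsoid $\overline{\mathcal C^+}\cap\{\xi\cdot h=1\}$, the $(-2)$-curve summands must accumulate near $\bar w$. But the convex combination lives in $\NE(X)\cap\{\xi\cdot h=1\}$, which strictly contains the ellipsoid, and $\bar w$ need not be extreme there; the nef part $P_n$ of the Zariski decomposition could a priori carry a fixed positive fraction of the mass while the negative part stays away from $\bar w$. This can be repaired (using $P_n\cdot N_n=0$ and the Lorentzian fact that two orthogonal null vectors are proportional), but it requires an argument you have not supplied.
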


\begin{rem}
  To cover all cases one would also need to consider K3 surfaces with Picard number
  less than $3$. The case of Picard number $1$ is trivial and the case of Picard
  number $2$ is handled in \eqref{cor:rank-two}.
\end{rem}

\begin{defno}
  Let $k$ be an algebraically closed field of arbitrary characteristic. Everything
  will be defined over $k$. 

  A K3 surface is a smooth projective surface $X$ such that  $\omega_X\simeq \sO_X$
  and $h^1(X,\sO_X)=0$. 

  Let $Z_1(X)$ denote the free abelian group generated by the irreducible and reduced
  $1$-dimensional subvarieties of $X$. Elements of this group are called
  \emph{$1$-cycles} on $X$.  Two $1$-cycles $C_1,C_2\in Z_1(X)$ are called
  \emph{numerically equivalent} if for any Cartier divisor $D$ on $X$, the
  intersection numbers $D\cdot C_1$ and $D\cdot C_2$ agree. This relationship is
  denoted by $C_1\equiv C_2$. All $1$-cycles numerically equivalent to the $0$ cycle
  form a subgroup of $Z_1(X)$, and the quotient is denoted by $N_1(X)_{\bZ}$. By
  extension of scalars we define 
  $$
  N_1(X):= N_1(X)_{\bZ}\otimes_{\bZ}\bR.
  $$

  The effective $1$-cycles in $N_1(X)$ generate a subsemigroup denoted by
  $NE(X)\subseteq N_1(X)$. This subsemigroup is called the \emph{cone of effective
    curves}. The \emph{closed cone of effective curves} is the closure of this in
  $N_1(X)$:
  $$
  \NE(X):=\overline{NE(X)}\subseteq N_1(X).
  $$ 
  For more details about the construction and basic properties of this, one should
  consult \cite[II.4]{Kollar96}.

  From now on $X$ is assumed to be a smooth projective surface defined over $k$.  A
  class $\xi\in N_1(X)$ is called \emph{integral} if it can be represented by a
  divisor on $X$. It is called \emph{effective} (respectively \emph{ample}) if it is
  integral and can be represented by an effective (respectively ample) divisor.  A
  class $\xi\in \NE(X)$ is called \emph{extremal} if it cannot be written as the sum
  of two incomparable classes in $\NE(X)$.  The class of a smooth rational curve is
  called a \emph{nodal} class. The set of all nodal classes is denoted by $\sN(X)$.

  Let $h$ be an ample class and define $$\sQ(X):= \{ \xi\in N_1(X) \mid \xi\cdot h>0,
  \xi\cdot\xi=0\}$$ Note that by the positivity condition this is just half of a
  quadric cone.

  The convex hull of a set will be denoted by $\conv$.

  An open subset of the boundary of the cone is called \emph{circular} if the cone is
  not locally finitely generated at any point in the open set.
\end{defno}


\noindent{\sc Acknowledgements}\\ \noindent
  This paper owes its existence to Max Lieblich who suggested that perhaps the
  results of \cite{Kovacs94} also hold in arbitrary characteristic.

  The author was supported in part by NSF Grant DMS-0856185, and the Craig McKibben
  and Sarah Merner Endowed Professorship in Mathematics at the University of
  Washington.

\section{Simple facts}

Let us start with an easy, well-known consequence of the Riemann-Roch theorem:

\begin{lem}
  \label{lem:riemann-roch}
  Let $a\in N_1(X)$ be an integral class. Then $a\cdot a$ is an even integer.  If
  furthermore $a\cdot a\geq -2$, then either $a$ or $-a$ is effective.
\end{lem}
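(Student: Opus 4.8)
The plan is to read everything off the Riemann--Roch theorem on the surface $X$, as the sentence preceding the lemma already hints. Since $a$ is integral, fix a divisor $D$ on $X$ with $D\equiv a$; working with $D$ in place of $a$ is harmless because $N_1(X)$ is taken modulo numerical equivalence, and the only implication we will actually need, namely ``$h^0(X,\sO_X(D))\ne 0 \Rightarrow a$ effective'', is immediate: if $D\lineq E$ with $E$ effective, then $a$ is the class of $E$, hence effective. The characteristic-free inputs are $\omega_X\simeq\sO_X$ (so $K_X\equiv 0$) and $\chi(X,\sO_X)=1-h^1(X,\sO_X)+h^2(X,\sO_X)=1-0+1=2$, where $h^2(X,\sO_X)=h^0(X,\omega_X)=1$ by Serre duality; both hold over any algebraically closed field.

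First I would establish the parity claim. Riemann--Roch for a divisor $D$ on a smooth projective surface reads
\[
\chi(X,\sO_X(D)) = \chi(X,\sO_X) + \tfrac12\,D\cdot(D-K_X),
\]
which, using $K_X\equiv 0$ and $\chi(X,\sO_X)=2$, becomes $\chi(X,\sO_X(D)) = 2 + \tfrac12\,(a\cdot a)$. Hence $a\cdot a = 2\bigl(\chi(X,\sO_X(D))-2\bigr)$ is an even integer.

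Next, assume in addition that $a\cdot a\ge -2$. Then the same identity gives $\chi(X,\sO_X(D)) = 2+\tfrac12(a\cdot a)\ge 1$, so, since $h^1\ge 0$,
\[
h^0(X,\sO_X(D)) + h^2(X,\sO_X(D)) \ge \chi(X,\sO_X(D)) \ge 1 .
\]
Thus at least one of these two groups is nonzero. If $h^0(X,\sO_X(D))\ne 0$ then $a$ is effective. If instead $h^2(X,\sO_X(D))\ne 0$, Serre duality together with $\omega_X\simeq\sO_X$ gives $h^2(X,\sO_X(D))=h^0(X,\omega_X\otimes\sO_X(-D))=h^0(X,\sO_X(-D))\ne 0$, so $-D\lineq E$ for some effective $E$ and $-a$ is effective. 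This proves the second assertion.

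The argument is essentially a one-line Riemann--Roch computation, so I do not expect a serious obstacle; the only points worth a second look are that the values $\chi(X,\sO_X)=2$ and $K_X\equiv 0$ are characteristic independent, and that the passage between a numerical class and an honest line bundle representing it is harmless. Nothing here genuinely uses characteristic zero, which is exactly why the whole development transfers.
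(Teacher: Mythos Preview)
Your proof is correct and is essentially the same as the paper's: both compute $\chi(\sO_X(D))=\tfrac12\,a\cdot a+2$ from Riemann--Roch with $K_X\equiv 0$ and $\chi(\sO_X)=2$, then use Serre duality to replace $h^2$ by $h^0$ of the dual line bundle. The only difference is cosmetic---the paper folds Serre duality and the value of $\chi(\sO_X)$ directly into one displayed identity, while you spell these steps out.
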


\begin{proof}
  As $a$ is integral, there exists a line bundle $\sL$ representing $a$.  By
  Riemann-Roch
  \begin{equation}
    \label{eq:3}
    h^0(X,\sL)-h^1(X,\sL)+h^0(X,\sL^\vee)= \frac 12\, a\cdot a + 2
  \end{equation}
  and hence $a\cdot a$ has to be even.

  If $a\cdot a\geq -2$, then the right hand side of (\ref{eq:3}) is positive, so
  either $h^0(X,\sL)>0$ or $h^0(X,\sL^\vee)>0$, so either $a$ or $-a$ is effective.
  \qed
\end{proof}

\begin{cor}
  $\sQ(X)\subset\NE(X)$ and for any $e\in\sQ(X)$ the hyperplane
  $(e\cdot\blank=0)=\{\xi\in N_1(X)\mid e\cdot \xi=0\}$ is a supporting hyperplane of
  $\conv(\sQ(X))$.
\end{cor}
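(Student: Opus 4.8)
The plan is to verify the two assertions separately, both relying on Lemma \ref{lem:riemann-roch}. For the inclusion $\sQ(X)\subset\NE(X)$, first note that it suffices to treat integral classes: $\sQ(X)$ is a half-cone cut out by the rational (indeed integral-coefficient) conditions $\xi\cdot\xi=0$ and $\xi\cdot h>0$, so rational classes are dense in it, and after clearing denominators any rational $\xi\in\sQ(X)$ is a positive multiple of an integral class; since $\NE(X)$ is closed, it is enough to show every integral $\xi\in\sQ(X)$ lies in $\NE(X)$. Given such an integral $\xi$, we have $\xi\cdot\xi=0\geq -2$, so by Lemma \ref{lem:riemann-roch} either $\xi$ or $-\xi$ is effective; but $\xi\cdot h>0$ while an effective class pairs nonnegatively with the ample class $h$, and $(-\xi)\cdot h<0$, so $-\xi$ cannot be effective. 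Hence $\xi$ is effective, in particular $\xi\in NE(X)\subset\NE(X)$. Taking closures gives $\sQ(X)\subset\NE(X)$.

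For the supporting hyperplane statement, fix $e\in\sQ(X)$ and consider the linear functional $\xi\mapsto e\cdot\xi$ on $N_1(X)$. I would show $e\cdot\xi\geq 0$ for all $\xi\in\conv(\sQ(X))$, with equality attained (namely at $\xi=e$ itself, since $e\cdot e=0$), which is exactly the assertion that $(e\cdot\blank=0)$ is a supporting hyperplane. By linearity it is enough to check $e\cdot\xi\geq 0$ for $\xi\in\sQ(X)$. This is the Hodge-index / Cauchy--Schwarz input: the intersection form on $N_1(X)$ has signature $(1,\rho-1)$, so on the subspace where it restricts it behaves like a Lorentzian form, and two classes $e,\xi$ with $e\cdot e=\xi\cdot\xi=0$ lying in the same "half" of the light cone — the half selected by positive pairing with the ample class $h$ — satisfy $e\cdot\xi\geq 0$. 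Concretely, for $t\in\bR$ the class $e-t\xi$ satisfies $(e-t\xi)\cdot(e-t\xi)=-2t(e\cdot\xi)$; if $e\cdot\xi<0$ this is positive for all $t>0$, and choosing $t$ large makes $(e-t\xi)\cdot h = e\cdot h - t(\xi\cdot h)$ negative while $(e-t\xi)^2>0$, contradicting the fact that a class of positive self-intersection cannot be orthogonal to, let alone negative on, the ample cone — more precisely, by Lemma \ref{lem:riemann-roch} (applied to a suitable integral multiple) such a class or its negative is effective, and sign considerations against $h$ give a contradiction just as in the first part.

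The main obstacle, such as it is, is being careful about integrality and density: Lemma \ref{lem:riemann-roch} applies only to integral classes, so every step that invokes it must first reduce an arbitrary real class in $\sQ(X)$ (or an auxiliary class like $e-t\xi$) to an integral one by rational approximation and clearing denominators, then pass back to the closure $\NE(X)$ or use continuity of the pairing. This is routine but must be done explicitly. A cleaner alternative for the second part is to invoke the Hodge index theorem directly — the form has exactly one positive eigenvalue, so the set $\{\xi : \xi\cdot\xi\geq 0,\ \xi\cdot h\geq 0\}$ is a convex cone (one nappe of the light cone together with its interior) and $\conv(\sQ(X))$ is contained in it, with $e$ on its boundary and $(e\cdot\blank = 0)$ the tangent hyperplane to the light cone there — but I would still spell out the sign bookkeeping to keep the argument self-contained and characteristic-free, since that is the whole point of the note.
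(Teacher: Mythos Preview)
Your argument is correct and, for the first assertion, essentially identical to the paper's (Riemann--Roch plus the sign test against $h$, then pass to the closure). One small caveat: the claim that rational classes are dense in $\sQ(X)$ itself can fail when $\rho(X)=2$ and the Picard lattice represents zero only trivially. The clean fix is the one you almost wrote down anyway: apply Lemma~\ref{lem:riemann-roch} to integral classes in the \emph{open} half--light--cone $\{\xi:\xi^2>0,\ \xi\cdot h>0\}$, where rational points are trivially dense, conclude that this open set lies in $\NE(X)$, and then get $\sQ(X)\subset\NE(X)$ by taking the closure.

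For the supporting-hyperplane statement the two proofs diverge. The paper leverages the first part: once $\sQ(X)\subset\NE(X)$, any irreducible effective class $\xi$ lying in $\conv(\sQ(X))$ has $\xi^2\geq 0$ and is therefore nef on the surface, so $\xi\cdot e\geq 0$ because $e\in\NE(X)$; density then gives the inequality on all of $\conv(\sQ(X))$. Your route is instead pure bilinear algebra via the Hodge index theorem: two null vectors in the same nappe of a Lorentzian light cone pair nonnegatively, and your $(e-t\xi)^2=-2t(e\cdot\xi)$ computation makes this explicit. Your approach has the virtue of being self-contained and independent of the first assertion, and it sidesteps any question about which effective classes actually populate $\conv(\sQ(X))$; the paper's approach is shorter on the page but quietly uses that an irreducible curve of nonnegative self-intersection is nef.
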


\begin{proof}
  The first statement follows directly from \eqref{lem:riemann-roch} and since for
  any $\xi\in \conv(\sQ(X))$ (effective) irreducible class, $\xi\cdot e\geq 0$, this
  also implies the second statement.
  \qed
\end{proof}

\begin{cor}\label{cor:ed-is-positive}
  Let $e,d\in \NE(X)$ such that $e\cdot e=0$ and $d\cdot d>0$. Then $d$ is in the
  interior of $\NE(X)$ and $e\cdot d>0$.
\end{cor}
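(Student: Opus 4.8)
The plan is to exploit the fact that on a surface the intersection pairing on $N_1(X)$ has signature $(1,\rho-1)$ by the Hodge index theorem, together with the Riemann--Roch consequences already recorded in \eqref{lem:riemann-roch} and its corollaries. First I would establish that $d$ lies in the interior of $\NE(X)$. Since $d\cdot d>0$, $d$ sits in the positive cone, i.e.\ the connected component of $\{\xi\mid\xi\cdot\xi>0\}$ containing an ample class (one checks $d$ is on the correct side by noting $d\in\NE(X)$ forces $d\cdot h\ge 0$ for an ample $h$, and $d\cdot h=0$ together with $d\cdot d>0$ would violate the Hodge index theorem, so $d\cdot h>0$). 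The positive cone is open and is contained in $\NE(X)$; indeed for any $\xi$ in it, $\xi\cdot\xi>0$ implies, after scaling $\xi$ into an integral class and applying \eqref{lem:riemann-roch}, that $\xi$ or $-\xi$ is effective, and the sign condition $\xi\cdot h>0$ rules out $-\xi$. Hence $d$ lies in an open subset of $\NE(X)$, so $d$ is an interior point.

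For the inequality $e\cdot d>0$: since $d$ is interior to $\NE(X)$ and $e\in\NE(X)$ is nonzero (it is nonzero because $e\cdot h>0$ would fail otherwise—more precisely, $e=0$ is excluded since $e\in\NE(X)\setminus\{0\}$ is what we should assume, or we argue $e\cdot e=0$ with $e$ a limit of effective classes and $e\ne 0$ in the cone), the pairing of an interior point of a full-dimensional cone with any nonzero element of the cone whose supporting functionals are the nef classes is... actually the cleanest route is: $d$ interior means $d=h'+\eta$ for an ample class $h'$ and some $\eta\in\NE(X)$ after rescaling; then $e\cdot d = e\cdot h' + e\cdot\eta$. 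The second term is $\ge 0$ since both $e,\eta\in\NE(X)$ and intersection of effective classes is nonnegative (limits preserve this), while $e\cdot h'>0$ because $h'$ is ample and $e\ne 0$ is a nonzero effective-cone class, so $e\cdot h'>0$. Therefore $e\cdot d>0$.

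The main obstacle I anticipate is the bookkeeping around which component of the positive cone $d$ lands in and ensuring $e\ne 0$; both are handled by the sign condition built into the hypotheses (everything in $\NE(X)$ pairs nonnegatively with a fixed ample class, and strictly positively unless the class is $0$), combined with the Hodge index theorem to forbid $d\cdot h=0$ when $d\cdot d>0$. One should also be slightly careful that $\NE(X)$ is genuinely full-dimensional and that its interior coincides with the set of classes having strictly positive intersection with every nonzero nef class—this is standard for surfaces, where $\NE(X)$ and the nef cone are dual, and the positive cone sits inside both, giving the interior description used above. Once these standard facts are in place the argument is short.
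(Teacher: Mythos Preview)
Your argument that $d$ lies in the interior of $\NE(X)$ is correct and is essentially the paper's: the open half-cone $\{\xi : \xi\cdot\xi>0,\ \xi\cdot h>0\}$ sits inside $\NE(X)$ by Riemann--Roch, and $d$ belongs to it.

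The argument for $e\cdot d>0$ has a genuine gap. After writing $d=h'+\eta$ with $h'$ ample and $\eta\in\NE(X)$, you claim $e\cdot\eta\ge 0$ because ``intersection of effective classes is nonnegative.'' That is false on any surface carrying a negative curve: already $\ell\cdot\ell=-2$ for a smooth rational curve $\ell$. More to the point, nothing in the hypotheses forces $e$ to be nef; for instance, if $\ell$ is a $(-2)$-curve and $f$ is effective with $f\cdot f=2$ and $f\cdot\ell=0$, then $e=\ell+f\in\NE(X)$ has $e\cdot e=0$ yet $e\cdot\ell=-2<0$. So neither the decomposition step nor the duality remark in your final paragraph (which would require $e$ to be nef) delivers the inequality.

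The fix is to work inside the light cone rather than inside $\NE(X)$. The paper uses the preceding corollary: $(e\cdot\blank=0)$ is a supporting hyperplane of $\conv(\sQ(X))$, and $d$ lies in the interior of that cone, hence $e\cdot d>0$. Equivalently, you can carry out directly the Hodge-index computation you invoked at the outset: write $e=\beta h+e'$ and $d=\alpha h+d'$ with $e',d'\in h^{\perp}$ and $\alpha,\beta>0$; then $e\cdot e=0$ and $d\cdot d>0$ give $(e'\cdot d')^2\le(-e'\cdot e')(-d'\cdot d')<(\alpha\beta\, h\cdot h)^2$ by Cauchy--Schwarz on the negative-definite part, so $e\cdot d=\alpha\beta\,(h\cdot h)+e'\cdot d'>0$. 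Either route avoids the false step; the point is that the inequality is a fact about the Lorentzian form, not about effectivity.
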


\begin{proof}
  Since $d$ is in the interior of $\sQ(X)$ and $\sQ(X)\subset \NE(X)$, it follows
  that $d$ is in the interior of $\NE(X)$. Then since $(e\cdot\blank=0)$ is a
  supporting hyperplane, $d$ cannot be contained in it and hence $e\cdot d>0$.
  \qed
\end{proof}

Next we establish that nodal rays can only accumulate along the cone generated by
$\sQ(X)$.

\begin{lem}\label{lem:limits-of-nodal-rays}
  Let $\{\ell_n\}\subset \sN(X)$ be an infinite sequence of nodal classes such that
  $\xymatrix@+1em{\rp{\ell_n} \ar[r]_-{n\to+\infty} & \rp{\xi}}$ for some
  $\xi\in\NE(X)$. Then $\xi\cdot\xi=0$.
\end{lem}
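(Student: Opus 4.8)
The plan is to reduce everything to the single geometric fact that a smooth rational curve on a K3 surface has self-intersection $-2$, and then to run a one-line continuity argument. First I would record that each $\ell_n$ is the class of a smooth rational curve $C_n\subset X$, so that, $\omega_X$ being trivial, adjunction gives $\ell_n\cdot\ell_n=C_n^2=2g(C_n)-2=-2$ since $C_n\cong\bP^1$. Next, as $\{\ell_n\}$ is an infinite subset of $\sN(X)$, after passing to a subsequence I may assume the $\ell_n$ are pairwise distinct. Finally I would translate the hypothesis $\rp{\ell_n}\to\rp{\xi}$ into the assertion that there exist real numbers $\lambda_n>0$ with $\lambda_n\ell_n\to\xi$ in $N_1(X)$ (normalize to unit vectors in any norm).

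With this in place the argument is immediate. By continuity of the intersection pairing on $N_1(X)$,
\[
  \lambda_n^{2}(\ell_n\cdot\ell_n)=(\lambda_n\ell_n)\cdot(\lambda_n\ell_n)\longrightarrow\xi\cdot\xi ,
\]
so $\lambda_n^{2}\to-\tfrac12\,\xi\cdot\xi$; in particular $\xi\cdot\xi\le 0$, and it remains only to exclude $\xi\cdot\xi<0$. If $\xi\cdot\xi<0$, set $L:=-\tfrac12\,\xi\cdot\xi>0$; then $\lambda_n\to\sqrt L\ne 0$, hence $\ell_n=\lambda_n^{-1}(\lambda_n\ell_n)\to L^{-1/2}\xi$. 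But $N_1(X)_{\bZ}$ is a free abelian group of finite rank spanning $N_1(X)$ (\cite[II.4]{Kollar96}), hence a discrete and closed subset; a convergent sequence contained in a discrete set is eventually constant, contradicting the pairwise distinctness of the $\ell_n$. Therefore $\xi\cdot\xi=0$.

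I do not expect a genuine obstacle: the only non-formal input is $\ell_n\cdot\ell_n=-2$, and the rest is soft. The one point to watch is the dichotomy at the start — a priori the limit ray could itself be a nodal $(-2)$-ray, and what excludes this is precisely that classes of curves lie in a lattice, so an infinite family of distinct nodal rays cannot accumulate at a ray of negative self-intersection. If one prefers not to invoke the discreteness of $N_1(X)_{\bZ}$ directly, one can instead fix an ample class $h$: since $h\cdot h>0$, the Hodge index theorem makes $h^{\perp}$ negative definite, and writing $\ell=\tfrac{\ell\cdot h}{h\cdot h}\,h+\ell'$ with $\ell'\in h^{\perp}$ shows $\ell'\cdot\ell'=-2-\tfrac{(\ell\cdot h)^{2}}{h\cdot h}$ is bounded once $\ell\cdot h$ is; as $(h\cdot h)\,\ell'$ is then an integral class of bounded norm in the negative definite space $h^{\perp}$, only finitely many nodal classes have bounded $h$-degree. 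Hence $\ell_n\cdot h\to\infty$, and intersecting $\lambda_n\ell_n\to\xi$ with $h$ forces $\lambda_n\to0$, so $\xi\cdot\xi=\lim(-2\lambda_n^{2})=0$ once again.
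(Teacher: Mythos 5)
Your proof is correct, but it runs on a different engine than the paper's. Both arguments get $\xi\cdot\xi\le 0$ the same way, from $\ell_n\cdot\ell_n=-2<0$ and continuity of the intersection form. The divergence is in ruling out $\xi\cdot\xi<0$. The paper does this by proving the complementary inequality $\xi\cdot\xi\ge 0$ directly: for any fixed nodal class $\ell$, all but finitely many $\ell_n$ are distinct irreducible curves different from $\ell$, so $\ell_n\cdot\ell\ge 0$ and in the limit $\xi\cdot\ell\ge 0$; feeding $\ell=\ell_n$ back in and passing to the limit once more gives $\xi\cdot\xi\ge 0$. You instead argue by contradiction: if $\xi\cdot\xi<0$ the normalizing scalars $\lambda_n$ converge to a nonzero limit, so the integral classes $\ell_n$ themselves converge, which is impossible in the discrete lattice $N_1(X)_{\bZ}$ (your Hodge-index variant is the same idea packaged as a finiteness statement for nodal classes of bounded $h$-degree, and is also fine). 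The trade-off is worth noting: the paper's argument uses only that the $\ell_n$ are pairwise distinct irreducible curves of negative self-intersection and needs no input beyond finite-dimensionality of $N_1(X)$, whereas yours never uses irreducibility or effectivity at all --- only integrality and $\ell_n\cdot\ell_n=-2$ --- but leans on the theorem of the base, i.e.\ that $N_1(X)_{\bZ}$ is a finitely generated (hence discrete, closed) lattice in $N_1(X)$. That fact is standard and implicit in the paper's whole setup, so your proof is complete; it just substitutes an arithmetic/topological input for the paper's geometric one.
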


\begin{proof}
  Let $\ell\in\sN(X)$. Then for infinitely many $n\in\bN$, $\ell_n\neq \ell$, so
  $\ell_n\cdot\ell\geq 0$ and hence $\xi\cdot\ell\geq 0$. Applying this with
  $\ell=\ell_n$ yields that $\xi\cdot \xi\geq 0$. On the other hand since
  $\ell_n\cdot\ell_n<0$, it follows that $\xi\cdot \xi\leq 0$ as well and so we must
  have $\xi\cdot\xi=0$.
  \qed
\end{proof}



\begin{cor}\label{cor:accumulation-of-nodal-rays}
  Let $h\in\NE(X)$ be an ample class and $\varepsilon>0$ a real number. Define
  $$
  \sQ_\varepsilon(X):= \{ \xi\in N_1(X) \mid \xi\cdot h=1, \xi\cdot\xi\geq
  -\varepsilon\}.
  $$
  Then the number of nodal classes not contained in $\conv(\sQ_\varepsilon(X))$ is
  finite.
\end{cor}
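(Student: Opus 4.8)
\medskip
\noindent\textbf{Proof proposal.}
The plan is to argue by contradiction, reducing the assertion to an accumulation statement for normalized nodal classes and then invoking Lemma~\eqref{lem:limits-of-nodal-rays}. First I would observe that the statement is really about rays: since $\conv(\sQ_\varepsilon(X))$ is contained in the affine hyperplane $H:=\{\xi\in N_1(X)\mid\xi\cdot h=1\}$, for a nodal class $\ell$ the ray $\rp\ell$ meets $\conv(\sQ_\varepsilon(X))$ if and only if the normalized class $\ol\ell:=\tfrac{1}{\ell\cdot h}\,\ell$, which lies in $H$, belongs to $\conv(\sQ_\varepsilon(X))$. Thus it suffices to show that $\ol\ell\in\conv(\sQ_\varepsilon(X))$ for all but finitely many $\ell\in\sN(X)$. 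Suppose not; then there is a sequence of pairwise distinct nodal classes $\{\ell_n\}$ with $\ol{\ell_n}\notin\conv(\sQ_\varepsilon(X))$ for all $n$, and the $\ol{\ell_n}$ are again pairwise distinct since two proportional nodal classes coincide (both have self-intersection $-2$).

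Next I would pass to a convergent subsequence. Each $\ol{\ell_n}$ is a positive multiple of an effective class and satisfies $\ol{\ell_n}\cdot h=1$, so $\ol{\ell_n}\in\NE(X)\cap H$, and this set is compact because $h$ is ample (see \cite[II.4]{Kollar96}). After passing to a subsequence we may therefore assume $\ol{\ell_n}\to\xi$ for some $\xi\in\NE(X)\cap H$. Since $\xi\cdot h=1$ we have $\xi\neq 0$ and $\rp\ell_n=\rp\ol{\ell_n}\to\rp\xi$, so Lemma~\eqref{lem:limits-of-nodal-rays} forces $\xi\cdot\xi=0$.

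To conclude I would use the continuity of the intersection form on $N_1(X)$: the set $U:=\{\zeta\in H\mid\zeta\cdot\zeta>-\varepsilon\}$ is open in $H$, it contains $\xi$ because $\xi\cdot\xi=0>-\varepsilon$, and clearly $U\subseteq\sQ_\varepsilon(X)\subseteq\conv(\sQ_\varepsilon(X))$. As $\ol{\ell_n}\to\xi$ inside $H$, we obtain $\ol{\ell_n}\in U\subseteq\conv(\sQ_\varepsilon(X))$ for all sufficiently large $n$, contradicting the choice of the $\ell_n$; hence only finitely many nodal classes can fail the conclusion. The substantive inputs are only Lemma~\eqref{lem:limits-of-nodal-rays} (already proved) and the compactness of $\NE(X)\cap H$, which is standard given that $h$ is ample; the one point that genuinely needs care is the reduction in the first step to normalized classes, without which the statement would be false for nodal classes of large $h$-degree.
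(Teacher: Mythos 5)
Your proof is correct and follows essentially the same route as the paper: compactness of the slice $\{\xi\in\NE(X)\mid\xi\cdot h=1\}$ via Kleiman's criterion, plus Lemma~\eqref{lem:limits-of-nodal-rays} to force any accumulation point of normalized nodal classes to lie in the interior of $\sQ_\varepsilon(X)$. The only difference is that you spell out the normalization of nodal classes to the hyperplane $\xi\cdot h=1$ and the final open-neighbourhood step, both of which the paper leaves implicit.
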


\begin{proof}
  The set $\{ \xi\in \NE(X) \mid \xi\cdot h=1\}$ is compact by Kleiman's criterion
  \cite[1.18]{KM98} and hence any infinite set contained in it has an accumulation
  point.  By \eqref{lem:limits-of-nodal-rays} all accumulation points have to be
  contained in $\sQ_\varepsilon(X)$, which implies the desired statement.
  \qed
\end{proof}

\begin{cor}\label{cor:non-nodal-extremal}
  Let $\xi\in \NE(X)$ be an extremal class which is not a multiple of a nodal class.
  Then $\xi\cdot\xi=0$.
\end{cor}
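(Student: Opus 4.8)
The plan is to reduce the statement to the following assertion: if $\rp\xi$ is an extremal ray of $\NE(X)$ with $\xi\cdot\xi<0$, then $\xi$ is a multiple of a nodal class. First note that an extremal class must lie on the boundary of $\NE(X)$: if $\xi$ were an interior point, one could choose a vector $v$ not proportional to $\xi$ with $\xi\pm v\in\NE(X)$, and then $\xi=\tfrac12(\xi+v)+\tfrac12(\xi-v)$ would exhibit $\xi$ as a sum of two incomparable classes. Hence, by \eqref{cor:ed-is-positive}, $\xi\cdot\xi\le 0$, since a class with positive self-intersection lies in the interior. So we may assume $\xi\cdot\xi<0$, and we must show that then $\xi$ is a multiple of a nodal class.

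Normalize so that $\xi\cdot h=1$. Since $\xi\in\NE(X)=\overline{NE(X)}$, write $\xi=\lim_n z_n$ with each $z_n$ an effective class and, after rescaling, $z_n\cdot h=1$. By Carathéodory's theorem applied to the cone $NE(X)$ inside the finite-dimensional space $N_1(X)$, we may take $z_n=\sum_i a_{n,i}[C_{n,i}]$ with $a_{n,i}>0$ and the $C_{n,i}$ \emph{distinct} irreducible curves, the number of summands being at most $\dim N_1(X)$. Because $z_n\to\xi$ and $\xi\cdot\xi<0$, for $n\gg0$ we have $z_n\cdot z_n<0$; writing $z_n\cdot z_n=\sum_i a_{n,i}(z_n\cdot C_{n,i})$ shows that some component, say $E_n:=C_{n,1}$, satisfies $z_n\cdot E_n<0$, and since distinct irreducible curves meet nonnegatively this forces $E_n\cdot E_n<0$ and $a_{n,1}>0$. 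Here the K3 hypothesis enters: adjunction gives $E_n\cdot E_n=2p_a(E_n)-2$ because $\omega_X\simeq\sO_X$, so $E_n\cdot E_n=-2$ and $E_n\cong\bP^1$, i.e.\ $[E_n]\in\sN(X)$. Write $z_n=a_nE_n+w_n$ with $a_n>0$ the full coefficient of $E_n$ in $z_n$, so that $w_n$ is effective with strictly fewer distinct components.

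Now pass to a subsequence so that $a_n(E_n\cdot h)\to\beta\in[0,1]$ and $E_n\cdot h$ either remains bounded or tends to $\infty$, and distinguish cases. If $\beta>0$ and $E_n\cdot h$ is bounded, then $[E_n]$ takes only finitely many values and is eventually a fixed nodal class $\ell$, and $a_n\to a>0$; hence $w_n\to\xi-a\ell\in\NE(X)$ and $\xi=a\ell+(\xi-a\ell)$, so extremality of $\rp\xi$ gives $\xi\in\rp\ell$ — done. If $\beta>0$ but $E_n\cdot h\to\infty$, then $E_n/(E_n\cdot h)$ converges to some $\eta$ with $\eta\cdot h=1$ and $\eta\cdot\eta=\lim\bigl(-2/(E_n\cdot h)^2\bigr)=0$, i.e.\ $\eta\in\sQ(X)$; since $a_nE_n\to\beta\eta$ we get $w_n\to\xi-\beta\eta\in\NE(X)$, and extremality forces $\xi\in\rp\eta$, contradicting $\xi\cdot\xi<0$. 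Finally, if $\beta=0$, then $a_nE_n\to 0$ (its $h$-degree tends to $0$ and $\{\zeta\in\NE(X)\mid\zeta\cdot h\le 1\}$ is compact by Kleiman's criterion), so $w_n\to\xi$ and, after renormalization, $\xi$ is a limit of effective classes with strictly fewer distinct components; one then concludes by induction on the number of distinct components, the base case of zero components being vacuous since $\xi\cdot h=1$.

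The substantive obstacle is exactly this final trichotomy. The negative curve $E_n$ that we peel off may run off to infinity in the cone — its $h$-degree blowing up while its coefficient collapses — and then it no longer detects $\xi$ directly; this is the ``circular'' behaviour quantified by \eqref{lem:limits-of-nodal-rays} and \eqref{cor:accumulation-of-nodal-rays}, and it is absorbed here by observing that the renormalized classes limit into $\sQ(X)$, where $\xi\cdot\xi=0$ would follow. The Carathéodory bound on the number of curves is what makes the inductive descent terminate.
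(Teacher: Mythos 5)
Your proof is correct, but it takes a genuinely different route from the paper's. The paper disposes of this corollary in a few lines by quoting the structural decomposition $\NE(X)=\sum_{\ell\in\sN(X)}\rp{\ell}+\conv(\sQ(X))$ from Riemann--Roch and \cite[II.4.14]{Kollar96} (which immediately gives $\xi\cdot\xi\le 0$), and then the refined version \eqref{eq:2} in which, by \eqref{cor:accumulation-of-nodal-rays}, only finitely many nodal rays survive outside $\conv(\sQ_\varepsilon(X))$; an extremal $\xi$ with $\xi\cdot\xi<0$ avoids $\conv(\sQ_\varepsilon(X))$ for small $\varepsilon$ and so must sit on one of those finitely many nodal rays. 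You instead reprove the needed piece of that structural result from scratch: Carath\'eodory to bound the number of components, Kleiman compactness, adjunction with $\omega_X\simeq\sO_X$ to force every irreducible curve of negative self-intersection to be a $(-2)$-curve, discreteness of integral classes to handle the bounded case, and an induction on the number of components to terminate the descent. What your approach buys is self-containedness (no appeal to \cite[II.4.14]{Kollar96} or to \eqref{cor:accumulation-of-nodal-rays}) and a clear isolation of where the K3 hypothesis enters; what it costs is length, since you are essentially reproducing the proof of the cone theorem for surfaces in the special case at hand. Two small remarks: your appeal to \eqref{cor:ed-is-positive} for ``positive square implies interior'' is a fair reading of that proof rather than of its statement; and both your argument and the paper's implicitly require $\varrho(X)\ge 2$ (for $\varrho(X)=1$ the ample generator is extremal with positive square and the statement fails), which is consistent with the context in which the corollary is used.
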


\begin{proof}
  It follows from Riemann-Roch and \cite[II.4.14]{Kollar96} that
  \begin{equation}
    \label{eq:1}
    \NE(X)= \sum_{\ell\in\sN(X)}\rp{\ell} + \conv(\sQ(X)),
  \end{equation}
  in particular $\xi\cdot\xi\leq 0$, and then for any $\varepsilon>0$,
  \begin{equation}
    \label{eq:2}
    \NE(X)= \sum_{ \scriptsize\begin{matrix}\ell\in\sN(X)\\
        \ell\not\in\conv(\sQ_\varepsilon(X))\end{matrix}}\rp{\ell} +
    (\NE(X)\cap \conv(\sQ_\varepsilon(X))),
  \end{equation}
  where the above sum is finite by \eqref{cor:accumulation-of-nodal-rays}.

  Suppose that $\xi\cdot\xi<0$.  Without loss of generality we may assume that
  $\xi\cdot h=1$ and choose an $\varepsilon>0$ such that $\xi\not\in
  \conv(\sQ_\varepsilon(X))$. Then $\xi$ is a multiple of a nodal class by
  (\ref{eq:2}).
  \qed
\end{proof}

\section{Subcones generated by two elements}

The following is a simple, but important computation.

\begin{lem}\label{lem:rank-two-subcones}
  Let $e,d$ be effective classes such that 
  $e$ is indecomposable and $d\cdot d>0$. Let $L$ be the $2$-dimensional linear
  subspace generated by $e$ and $d$ in $N_1(X)$ and $\sC=L\cap\NE(X)$. Then
  \begin{enumerate}
  \item if $e\cdot e=0$, then there exists an $f\in \sC$ such that $f$ is effective,
    $e\cdot f>0$, $e$ and $f$ are on opposite sides of $d$, and $f\cdot f=0$, and  
  \item if $e\cdot e=-2$, then there exists an $f\in \sC$ such that $f$ is effective,
    $e\cdot f>0$, $e$ and $f$ are on opposite sides of $d$, and     $f \cdot f=0$ or
    $f\cdot f=-2$.
  \end{enumerate}
\end{lem}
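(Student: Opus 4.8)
I would reduce the whole statement to two–dimensional linear algebra in $L$. First, $e$ and $d$ are linearly independent (as $e\cdot e\le 0<d\cdot d$), so $L$ is two–dimensional and rational; let $\Lambda:=L\cap N_1(X)_{\bZ}$, a rank–two lattice. By the Hodge index theorem the intersection form has signature $(1,\rho-1)$ globally, so its restriction $q$ to $L$ is semidefinite or of type $(1,1)$; computing the Gram determinant $(e\cdot e)(d\cdot d)-(e\cdot d)^2$, which is $<0$ — in case (2) since $(e\cdot e)(d\cdot d)=-2(d\cdot d)<0$, in case (1) since $e\cdot d>0$ by \eqref{cor:ed-is-positive} — one finds that $q$ has signature $(1,1)$. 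I would fix an ample class $h$ and let $f_1,f_2$ generate the two null rays of $q$ lying in the open half space $(h\cdot\blank>0)$ (there are exactly two, since $h\cdot f_i\ne 0$ because $h^\perp$ is negative definite); then $f_i\in\sQ(X)\subseteq\NE(X)$, and $f_1\cdot f_2>0$ because $(f_1+f_2)\cdot(f_1+f_2)=2f_1\cdot f_2$ is the square of a timelike class. Writing $\overline T:=\conv(\rp f_1\cup\rp f_2)$ with interior $T$, one has $\overline T\subseteq\NE(X)\cap L=\sC$; moreover $d\in T$ (as $d\cdot d>0$, $h\cdot d>0$), while $e$ lies outside $T$ (as $e\cdot e\le 0$) but inside $(h\cdot\blank>0)$, hence on one of the two sides of $T$. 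After interchanging $f_1$ and $f_2$ I may assume $e$ is on the $f_2$–side, so the $f_1$–side is the side of the line $\bR d$ not containing $e$; in case (1) this forces $e\in\rp f_2$, and hence makes $\rp f_2$ and therefore $\rp f_1$ rational.

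For $f$ I would take, roughly, an integral effective class sitting on or next to the forward null ray $\rp f_1$, i.e. on the side opposite $e$. If $q$ is isotropic over $\bQ$ — automatic in case (1) — let $f$ be the primitive integral generator of $\rp f_1$: then $f\cdot f=0$, and since $h\cdot f>0$, the class $f$ (and not $-f$) is effective by \eqref{lem:riemann-roch}, and $f\in\overline T\subseteq\sC$. If $q$ is anisotropic over $\bQ$, then $\rp f_1$ is irrational and we are necessarily in case (2), so $\Lambda$ carries a $(-2)$–class; here I would use the arithmetic of indefinite binary forms: $\mathrm{SO}^+(q)(\bZ)$ is infinite cyclic, generated by a hyperbolic element $u$, and after replacing $u$ by $u^{-1}$ one has $u^n e\to\rp f_1$ as $n\to+\infty$. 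Each $u^n e$ is an integral class of square $-2$ with $h\cdot(u^n e)>0$, hence effective by \eqref{lem:riemann-roch} and so in $\sC$; and for $n\gg 0$ it lies in the spacelike sector adjacent to $\rp f_1$, since being spacelike it cannot land in the timelike sector $T$. I would then set $f:=u^N e$ with $N\gg 0$, so $f\cdot f=-2$.

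Finally I would verify the remaining requirements. Effectivity and $f\in\sC$ were just noted. For "$e,f$ on opposite sides of $d$": $f$ lies on $\rp f_1$ or in the spacelike sector bordered by $\rp f_1$ and $\rp(-f_2)$, $e$ lies on $\rp f_2$ or in the spacelike sector bordered by $\rp f_2$ and $\rp(-f_1)$, and the line $\bR d$, running through $T$, separates the $f_1$–side from the $f_2$–side. For $e\cdot f>0$: writing $e=sf_1+tf_2$ with $t>0$ and $s\le 0$ (with $s=0$ exactly in case (1)), one gets $e\cdot f_1=t\,(f_1\cdot f_2)>0$ and $e\cdot(-f_2)=-s\,(f_1\cdot f_2)\ge 0$, so the linear functional $e\cdot(\blank)$ is positive on the cone spanned by $\rp f_1$ and $\rp(-f_2)$ except possibly along $\rp(-f_2)$; since $f$ lies on $\rp f_1$ or strictly inside that cone, $e\cdot f>0$.

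The one genuinely non–formal point, and the place I expect to be the main obstacle, is the anisotropic subcase of (2): pure convex geometry only delivers the irrational ray $\rp f_1$ on the far side of $d$, and to obtain from it an integral — hence effective — class of square $-2$ one needs the arithmetic input above, namely that the Pell-type equation $q=-2$ has infinitely many integral solutions together with control of how they cluster on $\rp f_1$. Everything else is the linear algebra in $L$, \eqref{lem:riemann-roch}, and $\sQ(X)\subseteq\NE(X)$; in particular the indecomposability hypothesis on $e$ is not used here, only $e\cdot e\in\{0,-2\}$ and the effectivity of $e$.
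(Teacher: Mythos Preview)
Your approach is at heart the paper's own: both split case~(2) according to whether $q\vert_L$ is isotropic over $\bQ$, and in the anisotropic case both invoke the Pell phenomenon --- you via the infinite cyclic group $\mathrm{SO}^+(q)(\bZ)$, the paper via an explicit substitution reducing $f\cdot f=-2$ to $x^2-Ny^2=1$ with $N=2(d\cdot d)+(e\cdot d)^2$. Your case~(1) and the isotropic subcase of~(2) are fine.

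There is, however, a genuine slip in the anisotropic subcase. The group $\mathrm{SO}^+(q)(\bR)$ preserves \emph{each} of the four cones cut out by the two null lines: in null coordinates it acts by $(x,y)\mapsto(\lambda x,\lambda^{-1}y)$ with $\lambda>0$, so every quadrant is invariant. Hence the orbit $\{u^n e:n\in\bZ\}$ is trapped in the spacelike sector containing $e$ --- the $f_2$--side --- and its two ends accumulate on $\rp f_2$ and on $\rp(-f_1)$, never on $\rp f_1$. Replacing $u$ by $u^{-1}$ merely swaps which end is which. So the assertion ``after replacing $u$ by $u^{-1}$ one has $u^n e\to\rp f_1$'' is false, and with it the blanket claim $h\cdot(u^n e)>0$.

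The repair is immediate once the issue is visible: apply the orbit to $-e$ instead. Since $-e$ sits in the opposite spacelike sector, one has $u^n(-e)=-u^n e\to\rp f_1$ for the appropriate sign of $n$, and because $h\cdot f_1>0$ one gets $h\cdot u^n(-e)>0$ for all large $n$; then $f:=u^N(-e)$ is an integral $(-2)$--class, effective by \eqref{lem:riemann-roch}, lying in the sector bordered by $\rp f_1$ and $\rp(-f_2)$. Your final paragraph already handles $f$ strictly inside that sector, so $e\cdot f>0$ and the ``opposite sides of $d$'' claim go through unchanged. The paper's explicit version of this step is to take a solution of $x^2-Ny^2=1$ with $x,y>0$ and check that the resulting $f=\alpha d-\beta e$ has $\alpha,\beta>0$.
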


\begin{proof}
  Let $A=d\cdot d$, $B=e\cdot d$, and $C=e\cdot e$. Set $f=\alpha d - \beta e$.
  Then
  $$ 
  f\cdot f= A\alpha^2 - 2B \alpha \beta + C\beta ^2.
  $$

  If $C=e\cdot e=0$, then $B=e\cdot d>0$ by \eqref{cor:ed-is-positive} and hence the
  equation
  $$
  0= A\alpha^2 - 2B \alpha\beta =\alpha (A\alpha - 2B\beta)
  $$
  has a positive integer solution, $\alpha= 2B$, $\beta=A$ such that the class
  $f=\alpha d-\beta e$ has $f\cdot f=0$.  Since $\alpha d= f + \beta e$, $f$ and $e$
  lie on opposite sides of $d$ and it also follows that $f$ is effective by
  \eqref{lem:riemann-roch} and since $e\cdot f>0$.

  If $C=e\cdot e=-2$, then set $x=B\alpha/2+\beta$, $y=\alpha/2$, and $N=2A+B^2$.
  Then
  \begin{multline*}
    f\cdot f= A\alpha^2 - 2B \alpha \beta -2 \beta ^2 =\\ = -2 \left(\left(
        B\alpha/2+\beta\right)^2 - (2A+B^2)(\alpha/2)^2\right)
    = -2(x^2-Ny^2).
  \end{multline*}

  Now if $N$ is a square, then as above there are two effective solutions for $f\cdot
  f=0$ and they are on opposite sides of $d$ and hence one of them is on the side of
  $d$ opposite to $e$.

  If $N$ is not a square then finding an $f$ with $f\cdot f=-2$ is equivalent to
  solving Pell's equation $x^2-Ny^2=1$ \cite[17.5.2]{MR1070716}. One may choose a
  solution with both $x,y>0$ which again ensures that $e$ and $f$ are on opposite
  sides of $d$ and that $e\cdot f>0$. This completes the proof.
  \qed
\end{proof}

\begin{cor}\label{cor:rank-two}
  If $\varrho(X)=2$, let $\NE(X)=\rp \xi +\rp \eta$. Then one of the following
  mutually exclusive cases hold:
  \begin{enumerate}
  \item Neither $\rp\xi$ nor $\rp\eta$ contain any effective classes.
  \item Both $\rp\xi$ and $\rp\eta$ contain an effective class of $0$ or $-2$
    self-intersection.
  \end{enumerate}
\end{cor}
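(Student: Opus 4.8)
The plan is to reduce case~(2) to a single application of \eqref{lem:rank-two-subcones}. Mutual exclusivity of (1) and (2) is immediate: (1) asserts that no boundary ray of $\NE(X)$ carries an effective class, while (2) asserts that both do. So it suffices to show that if (1) fails then (2) holds; renaming if necessary, assume that $\rp\xi$ contains an effective class.

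Since $\varrho(X)=2$, the cone $\NE(X)=\rp\xi+\rp\eta$ is two-dimensional, with $\rp\xi$ and $\rp\eta$ as its extremal rays. By \eqref{cor:ed-is-positive} a class of positive self-intersection lies in the interior of $\NE(X)$, so every effective class on the extremal ray $\rp\xi$ has self-intersection $\le 0$; being even by \eqref{lem:riemann-roch}, it is then $0$ or $\le -2$. First I would extract from $\rp\xi$ an indecomposable effective class $e$ with $e\cdot e\in\{0,-2\}$. If $\xi\cdot\xi<0$, then $\xi$ is a multiple of a nodal class $\ell$ by \eqref{cor:non-nodal-extremal}; take $e=\ell$, which has $e\cdot e=-2$, is primitive since its square is $-2$, and hence is indecomposable as the primitive generator of the extremal ray $\rp\xi$. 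If $\xi\cdot\xi=0$, then $\rp\xi$ contains an integral class (namely an effective one), so it has a primitive integral generator $e$; since $e\cdot e=0\ge -2$, either $e$ or $-e$ is effective by \eqref{lem:riemann-roch}, and it is $e$ because $e\cdot h>0$, and again $e$ is indecomposable as a primitive generator of an extremal ray.

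Then I would apply \eqref{lem:rank-two-subcones} to this $e$ and to a very ample class $d$, which is effective, has $d\cdot d>0$, and lies in the interior of $\NE(X)$ by \eqref{cor:ed-is-positive}. Since $\varrho(X)=2$, the plane $L$ of the lemma is all of $N_1(X)$ and $\sC=\NE(X)$, so this produces an effective class $f\in\NE(X)$ with $f\cdot f\in\{0,-2\}$ lying on the side of the line $\bR d$ opposite to $e\in\rp\xi$; that is, $f$ lies in the subcone of $\NE(X)$ spanned by $d$ and $\rp\eta$. Recall that $\NE(X)\supseteq\conv(\sQ(X))$, which here is the planar sector bounded by the two rays of $\sQ(X)$, with $d$ in its interior since $d\cdot d>0$ and with $f$ not in its interior since $f\cdot f\le 0$; hence the ray of $\sQ(X)$ on the $\rp\eta$ side of $d$ lies between $d$ and $f$. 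If $f\in\rp\eta$, then $\rp\eta$ carries the effective class $f$ with $f\cdot f\in\{0,-2\}$ and we are done. Otherwise $f$ lies strictly between $d$ and $\rp\eta$, so that ray of $\sQ(X)$ lies strictly between $d$ and $\rp\eta$, forcing $\eta\cdot\eta<0$; then $\rp\eta$ is a nodal ray by \eqref{cor:non-nodal-extremal}, and so again carries an effective class of self-intersection $-2$. In either case $\rp\eta$ contains an effective class of self-intersection $0$ or $-2$, and in particular an effective class, so running the same argument with $\rp\xi$ and $\rp\eta$ interchanged shows that $\rp\xi$ does too. This is case~(2).

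The step I expect to require the most care is the last one: \eqref{lem:rank-two-subcones} only locates $f$ somewhere in $\NE(X)$ on the far side of $d$, and one must use $f\cdot f\le 0$ together with \eqref{cor:non-nodal-extremal} to conclude that either $f$ itself, or else $\rp\eta$, lies on the boundary ray. Extracting the indecomposable $e$ with square $0$ or $-2$ is routine given \eqref{lem:riemann-roch} and \eqref{cor:non-nodal-extremal}, and the rest is just the planar geometry of a two-dimensional cone.
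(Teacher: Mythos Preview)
Your proof is correct and follows essentially the same approach as the paper: both apply \eqref{lem:rank-two-subcones} and then use the decomposition (\ref{eq:1}) (which you invoke through \eqref{cor:non-nodal-extremal}) to pin down the second boundary ray, the only organizational difference being that the paper splits on whether $\sN(X)=\emptyset$ while you split on whether case~(1) fails. One small redundancy: the final re-run with $\rp\xi$ and $\rp\eta$ interchanged is unnecessary, since you already produced $e\in\rp\xi$ with $e\cdot e\in\{0,-2\}$.
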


\begin{proof}
  If $\sN(X)=\emptyset$, then the decomposition in (\ref{eq:1}) implies that
  $\NE(X)=\conv(\sQ(X))$. If there exists an integral (equivalently, effective) class
  in $\sQ(X)$, then by (\ref{lem:rank-two-subcones}.1) we are in case
  (\ref{cor:rank-two}.2). If there are no integral classes in $\sQ(X)$ then we are in
  case (\ref{cor:rank-two}.1).

  If $\sN(X)\neq\emptyset$, then by the decomposition in (\ref{eq:1}) and
  (\ref{lem:rank-two-subcones}.2) we are in case (\ref{cor:rank-two}.2).
  \qed
\end{proof}

\section{K3 surfaces containing a smooth rational curve}

\begin{thm}\label{thm:main}
  Let $X$ be a K3 surface and $\xi\in\NE(X)$ an extremal vector.  Assume that
  $\varrho(X)\geq 3$ and $X$ contains a smooth rational curve. Then
  \begin{enumerate}
  \item $\NE(X)$ has no circular part,
  \item There exists a sequence $\ell_n\in\sN(X)$ such that
      $\xymatrix@+1em{\rp{\ell_n} \ar[r]_-{n\to+\infty} & \rp{\xi}}$, and
  \item $$\NE(X)=\overline{\sum_{\ell\in\sN(X)}\rp{\ell}}.$$
  \end{enumerate}
\end{thm}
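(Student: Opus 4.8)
The plan is to deduce all three statements from the structural decomposition \eqref{eq:1} of $\NE(X)$ together with the accumulation result \eqref{cor:accumulation-of-nodal-rays} and the rank-two computation \eqref{lem:rank-two-subcones}. The essential point is that \eqref{cor:rank-two} forces the quadric-cone part $\conv(\sQ(X))$ to be "absorbed" by nodal rays: if $X$ contains a smooth rational curve $e$ with $e\cdot e=-2$ and $d\in\sQ(X)$ is any integral class with $d\cdot d>0$ (which exists after passing to a suitable ample class, since $\varrho(X)\ge 3$ guarantees plenty of such $d$), then applying \eqref{lem:rank-two-subcones}.2 inside the plane $L$ spanned by $e$ and $d$ produces an effective $f$ with $f\cdot f\in\{0,-2\}$ on the far side of $d$; iterating this on both sides of $d$ shows that the boundary of $\NE(X)$ near any point of $\sQ(X)$ is a limit of nodal rays. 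This is the heart of the argument and also the main obstacle: one has to be careful that the effective classes $f$ produced are genuinely new and genuinely nodal (not just $-2$-classes, which by \eqref{lem:riemann-roch} are effective but may be reducible), and that the process actually converges to the chosen boundary point rather than stalling. The resolution is that an effective $-2$-class decomposes into nodal classes plus $\conv(\sQ(X))$ again by \eqref{eq:1}, and a class on the boundary of $\NE(X)$ with positive self-intersection is impossible by \eqref{cor:ed-is-positive}, so any boundary accumulation point lands in $\sQ(X)$ and is approximated by the $\rp\ell_n$.

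First I would prove (3). By \eqref{eq:1} it suffices to show $\conv(\sQ(X))\subseteq\overline{\sum_{\ell\in\sN(X)}\rp\ell}$. Fix $e\in\sN(X)$ (it exists by hypothesis), so $e\cdot e=-2$. Given any extremal ray $\rp d$ of $\conv(\sQ(X))$, i.e. $d\cdot d=0$, I would instead work with an interior class $d'$ of $\sQ(X)$ close to $d$ with $d'\cdot d'>0$ and integral; the plane spanned by $e,d'$ meets $\NE(X)$ in a two-dimensional subcone, and \eqref{lem:rank-two-subcones}.2 yields an effective $f$ with $e\cdot f>0$, $f$ on the opposite side of $d'$ from $e$, and $f\cdot f\in\{0,-2\}$. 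Replacing $e$ by $f$ and iterating (alternating the roles so as to push toward the boundary of $\sQ(X)$ from both sides), one generates infinitely many effective classes whose rays accumulate, by \eqref{lem:limits-of-nodal-rays} applied to their nodal components, onto $\sQ(X)$; by \eqref{cor:accumulation-of-nodal-rays} all but finitely many of these lie arbitrarily close to the chosen boundary point. Hence $\rp d\subseteq\overline{\sum_{\ell\in\sN(X)}\rp\ell}$, and since this holds for every extremal ray of the closed convex cone $\conv(\sQ(X))$, the whole cone is contained in the closure of the nodal rays, proving (3).

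Next, (2) follows quickly: by \eqref{cor:non-nodal-extremal}, if the extremal $\xi$ is not a multiple of a nodal class then $\xi\cdot\xi=0$, so $\xi\in\conv(\sQ(X))$ (after scaling $\xi\cdot h>0$), and the argument just given for (3) exhibits a sequence $\ell_n\in\sN(X)$ with $\rp{\ell_n}\to\rp\xi$; if $\xi$ is already a multiple of a nodal class the constant sequence works. Finally, (1): a circular piece of the boundary would be an open subset of $\partial\NE(X)$ at no point of which the cone is locally finitely generated. But by \eqref{cor:accumulation-of-nodal-rays}, the nodal rays can accumulate only on $\sQ(X)$; away from $\conv(\sQ(X))$ the decomposition \eqref{eq:2} shows $\NE(X)$ is locally the sum of the finitely many nodal rays outside $\conv(\sQ_\varepsilon(X))$ together with $\NE(X)\cap\conv(\sQ_\varepsilon(X))$, so it is locally finitely generated there. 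At points of $\sQ(X)$ itself, part (3) shows the boundary is a countable union of nodal rays accumulating onto each point of $\sQ(X)$, and a countable locally discrete family of extremal rays accumulating onto a boundary point means the cone \emph{is} locally finitely generated in any neighborhood disjoint from the accumulation locus but fails to be so only at the single accumulation ray — so no \emph{open} subset of $\partial\NE(X)$ can be entirely circular. This forbids a circular part and completes the proof. \qed
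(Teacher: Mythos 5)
There is a genuine gap, and it sits exactly where you flagged the ``main obstacle.'' Your proof of (3) iterates \eqref{lem:rank-two-subcones} inside the two-dimensional plane $L$ spanned by a nodal class $e$ and an integral class $d'$, and then asserts that the resulting effective classes accumulate at the prescribed boundary ray $\rp{d}$. Nothing forces this: the lemma only produces one class $f$ on the far side of $d'$ with $f\cdot f\in\{0,-2\}$; if $f\cdot f=0$ the iteration stalls (applying (\ref{lem:rank-two-subcones}.1) to $f$ yields only further square-zero classes, never nodal ones), and even when it does not stall, \eqref{cor:accumulation-of-nodal-rays} only says the limit rays lie somewhere on $\sQ(X)$, not at $\rp{d}$ --- which need not even lie in the plane $L$. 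A symptom of the problem is that your argument never uses $\varrho(X)\ge 3$ in an essential way; run at Picard number $2$ it would ``prove'' that a K3 surface with one nodal boundary ray and one boundary ray spanned by a square-zero class has $\NE(X)$ generated by nodal classes, which is false (this is exactly why \eqref{cor:rank-two} allows a square-zero generator). Your proof of (1) is likewise insufficient: it relies on (3) and on the assumption that local finite generation can fail only at isolated accumulation rays, whereas the danger to be excluded is precisely that an open piece of the quadric $\sQ(X)$ sits inside $\partial\NE(X)$ with no negative classes nearby at all.

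The paper's proof runs in the opposite order, and the hard step is (1), where $\varrho(X)\ge 3$ enters decisively. One contracts the nodal curve $\ell$, pulls back two independent ample classes from the contraction to get effective $d_1,d_2$ with $\ell\cdot d_i=0$, and, assuming a circular part $U\subseteq\sQ(X)$ exists, uses \eqref{lem:rank-two-subcones} to produce an effective $e\in U$ with $e\cdot e=0$. The three independent classes $e,d,\ell$ then feed an explicit one-parameter family of integral classes $d_n$ with $d_n\cdot d_n=-2$ (effective by \eqref{lem:riemann-roch}) whose rays converge to $\rp{e}$, contradicting the fact that near a circular part every effective class has non-negative self-intersection; the non-degeneracy $2B^2\ne AC^2$ needed for this convergence is itself extracted from circularity. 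Statement (2) is then deduced from (1) via \cite[2.6]{Kovacs94} (an extremal ray in $\sQ(X)$ not lying in the closure of the nodal cone forces a circular part), and (3) follows from (2) because every class is a finite sum of extremal ones. To close the gap you would need to supply both the three-dimensional construction and the convex-geometry input of \cite[2.6]{Kovacs94}.
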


\begin{proof}
  Let $\ell$ denote the class of a smooth rational curve, guaranteed by the
  assumption and let $\sigma: X\to X'$ be the morphism contracting $\ell$ to a point.
  Recall that $X'$ is still projective and let $h_1$ and $h_2$ be two linearly
  independent ample classes on $X'$. Let $d_i=\sigma^*h_i$ for $i=1,2$ and observe
  that $d_1$ and $d_2$ are effective classes on $X$ such that $\ell\cdot
  d_1=\ell\cdot d_2=0$ and $\ell, d_1, d_2$ are linearly independent.

  Suppose there exists $U\subset \partial \NE(X)$ a non-empty open subset of
  $\partial \NE(X)$ such that $\rp U$ is a circular part (i.e., nowhere locally
  finitely generated) of $\NE(X)$. By \eqref{lem:limits-of-nodal-rays} it follows
  that $U\subseteq \sQ(X)$ and hence in a neighbourhood of $\rp U$ every effective
  class has non-negative self-intersection. 

  Let $h$ be an arbitrary ample class and observe that \eqref{lem:rank-two-subcones}
  implies that in the $2$-dimensional linear subspace generated by $h$ and $\ell$
  there is an effective class $f$ with either $f\cdot f=0$ or $f\cdot f=-2$ and such
  that $f$ is on the side of $h$ opposite to $\ell$.  We may repeat the same
  procedure with $\ell$ replaced by $f$ and $h$ replaced by another ample class and
  find that these classes are all over near the boundary of $\NE(X)$. In particular,
  we can find an ample class $h\in\NE(X)$ and an effective class $f$ with either
  $f\cdot f=0$ or $f\cdot f=-2$ such that the $2$-dimensional linear subspace
  generated by $h$ and $f$ intersects $U$ non-trivially. Then applying
  \eqref{lem:rank-two-subcones} again and combining it by the observation above we
  obtain that there exists an effective class $e\in U$ such that $e\cdot e=0$.

  Next let $d$ be one of $d_1$ and $d_2$ such that $e,d,\ell$ are linearly
  independent.  Let $A=d\cdot d$, $B=e\cdot d$, and $C=e\cdot \ell$. Recall that by
  the choice of $d$ we have $d\cdot\ell =0$.

  \setcounter{equation}3
  \begin{subclaim}\label{subclaim:nonzero}
    $2B^2\neq AC^2$
  \end{subclaim}
  \begin{proof}
    Suppose $2B^2= AC^2$ and let $f=ACe-BCd+B^2\ell$. Then $f\cdot f=
    B^2(AC^2-2B^2)=0$ and $f\cdot e=0$. Then by \eqref{lem:riemann-roch} $f$ or $-f$
    is effective. However, since $e\in U$ which is a circular part of
    $\partial\NE(X)$, the only effective classes contained in the hyperplane
    $(e\cdot\blank=0)$ are multiples of $e$. This implies that then $BC=B^2=0$, so
    $B=0$. Applying the same argument for $d$, it would follow that $d$ is a multiple
    of $e$ which is impossible by the choice of $d$ and $e$. Therefore we reached a
    contradiction and hence the claim is proven.  
    \qed
  \end{proof}

  \noindent
  {\it Continuation of the proof of \eqref{thm:main}.}  Next let $n\in\bN$ and for
  $n\geq 3$ define
  $$
  \pm d_n= (2(2B^2-AC^2)C n^2 - 4Bn) e+ (2C^2n) d + (1-2BCn) \ell.
  $$
  Then $d_n\cdot d_n=-2$ and by \eqref{lem:riemann-roch} either $d_n$ or $-d_n$ is
  effective. Choose $d_n$ to be effective. By \eqref{subclaim:nonzero} $2B^2-AC^2\neq
  0$ and hence $\rp d_n\to \rp e$, but this contradicts the observation that in a
  neighbourhood of $U$ every effective class has non-negative self-intersection.
  Therefore (\ref{thm:main}.1) is proven.

  Now let $\xi\in \NE(X)$ extremal. If no multiple of $\xi$ is in $\sN(X)$, then
  $\xi\in \sQ(X)$ by \eqref{cor:non-nodal-extremal}. If $\xi$ were not contained in
  the closure of the convex cone generated by $\sN(X)$, then by \cite[2.6]{Kovacs94}
  $\NE(X)$ would have a circular part, so (\ref{thm:main}.2) follows from
  (\ref{thm:main}.1).

  Finally, since every class in $\NE(X)$ may be written as a sum of finitely many
  extremal classes (\ref{thm:main}.3) follows from (\ref{thm:main}.2).
  \qed
\end{proof}

\begin{cor}\label{cor:short-main}
  Let $X$ be a K3 surface of Picard number at least three over an algebraically
  closed field of arbitrary characteristic. Then one of the following mutually
  exclusive conditions are satisfied:
  \begin{align}
  \NE(X) & =\conv(\sQ(X)),\text{ or}\\
  \NE(X) & =\overline{\sum_{\ell\in\sN(X)}\rp \ell}.
  \end{align}
\end{cor}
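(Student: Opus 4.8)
The plan is to argue by a dichotomy according to whether $\sN(X)$ is empty. First, if $\sN(X)=\emptyset$, then $X$ contains no smooth rational curve and the decomposition \eqref{eq:1} degenerates to $\NE(X)=\conv(\sQ(X))$, so the first alternative holds. Second, if $\sN(X)\neq\emptyset$, then $X$ contains a smooth rational curve; since $\varrho(X)\geq 3$ by hypothesis, \eqref{thm:main}(3) applies and yields $\NE(X)=\overline{\sum_{\ell\in\sN(X)}\rp\ell}$, which is the second alternative. This shows that at least one of the two conditions holds.

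To see that the two conditions are mutually exclusive, I would first record the elementary fact that every class in $\conv(\sQ(X))$ has non-negative self-intersection. Indeed, by the corollary following \eqref{lem:riemann-roch}, for each $e\in\sQ(X)$ the hyperplane $(e\cdot\blank=0)$ is a supporting hyperplane of $\conv(\sQ(X))$, so $e\cdot\xi\geq 0$ for every $\xi\in\conv(\sQ(X))$; writing an arbitrary such $\xi$ as a non-negative combination $\xi=\sum_i t_i e_i$ with $e_i\in\sQ(X)$ then gives $\xi\cdot\xi=\sum_i t_i(e_i\cdot\xi)\geq 0$ (equivalently, this is the Hodge index theorem). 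Now suppose the first alternative $\NE(X)=\conv(\sQ(X))$ holds. If $\sN(X)$ were non-empty, a nodal class $\ell$ would lie in $\NE(X)=\conv(\sQ(X))$ and yet satisfy $\ell\cdot\ell=-2<0$, a contradiction; hence $\sN(X)=\emptyset$, so the right-hand side of the second alternative is $\{0\}$. Since $\NE(X)$ spans $N_1(X)$ (it contains an ample class in its interior), it is not $\{0\}$, so the second alternative fails. Thus at most one of the two conditions holds, completing the argument.

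I do not expect a serious obstacle here: the statement is essentially a repackaging of the decomposition \eqref{eq:1}, Theorem \eqref{thm:main}, and the Hodge index theorem. The only step that needs a little care is the mutual-exclusivity half, where one must separately dispose of the degenerate case $\sN(X)=\emptyset$ — in which the sum $\sum_{\ell\in\sN(X)}\rp\ell$ is empty and collapses to $\{0\}$ — rather than relying on a nodal class always being available to produce the contradiction.
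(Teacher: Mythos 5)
Your proof is correct and follows essentially the same route as the paper: the same dichotomy (the paper phrases it as ``$X$ contains a curve of negative self-intersection,'' which on a K3 surface is equivalent by adjunction to $\sN(X)\neq\emptyset$), with case one read off from the decomposition \eqref{eq:1} and case two from \eqref{thm:main}. Your explicit verification of mutual exclusivity via the Hodge-index/supporting-hyperplane observation is a correct addition that the paper leaves implicit.
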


\begin{proof}
  If $X$ does not contain any curve of negative self-intersection, then
  (\ref{cor:short-main}.1) follows from (\ref{eq:1}).  Otherwise
  (\ref{cor:short-main}.2) follows from \eqref{thm:main}.
  \qed
\end{proof}

\begin{rem}
  Clearly \eqref{cor:short-main} is equivalent to \eqref{thm:intro}.
\end{rem}


\def\cprime{$'$} \def\polhk#1{\setbox0=\hbox{#1}{\ooalign{\hidewidth
  \lower1.5ex\hbox{`}\hidewidth\crcr\unhbox0}}} \def\cprime{$'$}
  \def\cprime{$'$} \def\cprime{$'$} \def\cprime{$'$}
  \def\polhk#1{\setbox0=\hbox{#1}{\ooalign{\hidewidth
  \lower1.5ex\hbox{`}\hidewidth\crcr\unhbox0}}} \def\cdprime{$''$}
  \def\cprime{$'$} \def\cprime{$'$} \def\cprime{$'$} \def\cprime{$'$}
\providecommand{\bysame}{\leavevmode\hbox to3em{\hrulefill}\thinspace}
\providecommand{\MR}{\relax\ifhmode\unskip\space\fi MR}
\providecommand{\MRhref}[2]{%
  \href{http://www.ams.org/mathscinet-getitem?mr=#1}{#2}
}
\providecommand{\href}[2]{#2}

\end{document}